\def \frac#1#2{{#1\over #2}}
\def\1{{\bf 1}}
\def\0{{\bf 0}}
\def\2{{\bf 2}}
\def\x{{\bf x}}
\def\y{{\bf y}}
\def\v{{\bf v}}
\def\w{{\bf w}}
\def\b{{\bf b}}
\def\cfrac#1#2{{#1\over #2}}
\newtheorem{defi}{Definition}[section]
\newtheorem{risu}{Result}[section]
\newtheorem{teo}{Theorem}[section]
\newtheorem{prop}{Proposition}[section]
\newtheorem{rem}{Remark}[section]
\newcommand{\be}{\begin{enumerate}}
\newcommand{\ee}{\end{enumerate}}
\newcommand{\bi}{\begin{itemize}}
\newcommand{\ei}{\end{itemize}}
\newcommand{\beq}{\begin{equation}}
\newcommand{\eeq}{\end{equation}}
\numberwithin{equation}{section} 
\begin{document}

\baselineskip=14 pt

\title{$K-$Fibonacci sequences and minimal winning quota\\ in Parsimonious games\footnote{A very preliminary version of the paper has been presented at the 2013 Workshop of the Central European Program in Economic Theory, which took place in Udine (20-21 June) and may be found in CEPET working papers \cite{PrePla13}.}}

\author{Flavio Pressacco\textsuperscript{a}, Giacomo Plazzotta\textsuperscript{b} and Laura Ziani\textsuperscript{c}}

\date{}

\maketitle

\noindent\textsuperscript{a} Dept. of Economics and Statistics D.I.E.S., Udine University, Italy\\ 
\textsuperscript{b} Imperial College London, UK\\
\textsuperscript{c} Dept. of Economics and Statistics D.I.E.S., Udine University, Italy 

\begin{abstract}
Parsimonious games are a subset of constant sum homogeneous weighted majority games unequivocally described by their free type representation vector. We show that the minimal winning quota of parsimonious games satisfies a second order, linear, homogeneous, finite difference equation with nonconstant coefficients except for uniform games. We provide the solution of such an equation which may be thought as the generalized version of the polynomial expansion of a proper $k-$Fibonacci sequence.
In addition we show that the minimal winning quota is a symmetric function of the representation vector; exploiting this property it is straightforward to prove that twin Parsimonious games, i.e. a couple of games whose free type representations are each other symmetric, share the same minimal winning quota.
\end{abstract}

\renewcommand{\abstractname}{Keywords}
\begin{abstract}
Homogeneous weighted majority games,
parsimonious games, minimal winning quota, $k-$Fibonacci sequence, Fibonacci polynomials.
\end{abstract}

\renewcommand{\abstractname}{Acknowledgements}
\begin{abstract}
 We acknowledge financial support of MedioCredito Friuli Venezia Friuli through the ``Bonaldo Stringher'' Laboratory of Finance, Department of Finance, University of Udine.
\end{abstract}

\section{Introduction}

In this paper we treat the problem of finding a closed formula for the minimal winning quota of parsimonious games (henceforth $P$ games) as a function of the free type representation of the game.

$P$ games have been introduced in a vintage paper by Isbell (\cite{Isb56}, 1956) as the subset of constant sum homogeneous weighted majority games characterized by the parsimony property to have, for any given number $n$ of non dummy players in the game, the smallest number, i.e. exactly $n$, of minimal winning coalitions.

It turns out that, in any $n$ person $P$ game, there are $h$ $(2\leq h\leq n-2)$ types of players; in the minimal homogeneous representation of the game, type $t$ players share the same type weight $w_t$. Conversely a $P$ game is fully and unequivocally described by its type representation, the ordered (according to an increasing weight convention) $h$ dimension vector $\x=(x_1,\ldots,x_t,\ldots,x_h)$, whose component $x_t$ is the (obviously positive integer) number of type $t$ players in the game. Besides lower bounds on $x_1$ and $x_{h-1}$, this vector should satisfy the binding constraint $x_h=1$. Then, what really matters is the free type version $_f\x=(x_1,\ldots,x_t,\ldots,x_{h-1})$ of the representation, which is obtained by deleting the last component $x_h$ of $\x$. In particular, we shall see that an important role in what follows is played by the subset of $k-$uniform $P$ games (henceforth $k-UP$ games), those with uniform free type representation at the level $x_t=k=(n-1)/(h-1)$.

Elsewhere (\cite{PPZ1}, sect. 2, formula 2.1) we put in evidence that starting from the standard initial conditions on the weight of the groups labelled 0 (dummy players) and 1 (non dummy with smallest weight), a simple rule gives, for any $P$ game, the sequence of type weights for all but the top player as the recursive solution of a second order linear homogeneous finite difference equation, with nonconstant coefficients except for the subset of $k-UP$ games.

Moreover, keeping account of the rule governing the weight of the top player and of the fact that in any $P$ game the coalition made by the top player and one of the last but top is minimal winning, we show here that also the minimal winning quota $q$ is, for any $h$, the solution of a finite difference equation with exactly the same structure and the same initial conditions of the weight equation and hence with non constant coefficients except for the $k-UP$ games.
In particular, for the subset of $k-UP$ games the corresponding finite difference equations have constant coefficient $k$.

Looking at the wide literature (see citations in \cite{Wen97} and in \cite{FalPla07}) on properties and solutions of second order linear recurrence equations you can check that special attention has been devoted for $k$ positive integer to $k-$Fibonacci equations, i.e. equations of the form $F_{n}(k)=k\cdot F_{n-1}(k)+F_{n-2}(k)$ with initial conditions $F_{0}(k)=0$ and $F_{1}(k)=1$, whose solution is known as the $k-$Fibonacci sequence.

It was a lucky surprise to realize that both the structure and the initial conditions of the $k-$Fibonacci equations coincide with those of the weight problem and of the minimal winning quota problem of $k-UP$ games.

This makes clear that the problem of finding the sequence of weights (or respectively the sequence of minimal winning quotas) of $k-UP$ games is isomorphic to the one of solving the corresponding constant coefficients $k-$Fibonacci equation.

We recall that closed form of this solution may be expressed either by the Binet's formula (see \cite{FalPla07} and \cite{StRo06}) or by a polynomial expansion in $k$, whose coefficients may be linked to Pascal triangles (precisely, to the coefficients of the so called 2-Pascal triangles, see \cite{FalPla07} and \cite{FalPla09} or of the ``shallow diagonals'' of the classic Pascal triangle, see for example \cite{AnCaPe11}).
We found useful to introduce here the alternative but perfectly equivalent polynomial representation, $q_h(k)=\sum_{s=0,\ldots,h-1} C^{'}(h,s)\cdot k^s$ whose coefficients are given by the elements $C^{'}(h,s)$ of a conveniently modified Pascal triangle.
Going back now to the minimal winning quota of the general case of $P$ game non $U$, it is clear that this problem too is
isomorphic to the one of solving the corresponding nonconstant coefficients finite difference equation.

This suggested to us that the solution could be expressed by the generalized version (in terms of the free type representation $_f\x$) of the polynomial expansion in $k$ found for the $UP$ (constant coefficients) case. Indeed, the key connection between the particular and the general case comes from the coefficients $C^{'}(h,s)$ of the polynomial expansion, which play the same role in both cases. Precisely it turns out that, for any combination of $h$ types and $s$ factors, the $C^{'}(h,s)$ which multiplied $k^s$ in the polynomial expansion of $q_h(k)$ in the $U$ case, is now the number of different products of $s$ factors chosen (according to proper feasibility rules) from the free type representation vector. After that the minimal winning quota is simply the sum over all $s$ (from 0 to $h-1$) of such products.

Among other things, it implies that the number of addends of the polynomial expansion of the minimal winning quota of any $P$ game with $h$ types is exactly the $h-$th number of the (classical) Fibonacci sequence.

Summing up the first part of the paper, we may say that the minimal winning quota of any $P$ game with $n$ players and $h$ types may be seen as a generalized version of the $h-$th number of the $k-$Fibonacci sequence, with $k=(n-1)/(h-1)$. See examples 2 and 3 in section \ref{sect:9}.

In the second part of the paper we will see that the solution of our finite difference equation is a symmetric function of the free type representation of the game. This paves the way for an alternative proof of the result given elsewhere (see \cite{PPZ2}, sect. 4) on the equality of the minimal winning quota of any couple of twin games, i.e. games whose free type representations are each other symmetric.

The plan of the paper is as follows. A short recall of the
properties of $P$ games, useful for our treatment, is given in
section \ref{sect:2}. Section \ref{sect:3} is devoted to derive
the difference equation describing the behaviour of the minimal
winning quota in not $U$ as well as in $UP$ games. Section
\ref{sect:4} recalls some results on $k-$Fibonacci polynomials
and $k-$Fibonacci sequences. Section \ref{sect:5} gives closed
form solutions of the finite difference equation of the minimal
winning quota for $UP$ games, along with a discussion of the
connection between the polynomial expansion and the
coefficients of the modified Pascal triangle resuming such
coefficients. Section \ref{sect:6} builds a bridge between the
solution of the $U$ and the not $U$ case, and describes the
rule behind the generalized version of the polynomial
expansion, giving the minimal winning quota for the not $U$
games as a function $q_h(_f\x)$ of the free type representation
of the game. In section \ref{sect:7} we show that $q_h$ is a
symmetric function of its argument $(_f\x)$. Section
\ref{sect:8} gives an alternative proof of the equality of the
minimal winning quota of twin games based on such a property of
symmetry; section \ref{sect:9} offers some examples and
conclusions follow in section \ref{sect:10}.

\section{A short recall of the main properties of $P$
games}\label{sect:2}

As usual $N$ is the set of players whose number is supposed
here greater than three\footnote{We will consider here $P$ games
with $n>3$, a necessary condition to have at least two types of players in the game. See \cite{Isb56}, p. 185}, $S$ is any
coalition (subset of players), $v(S)$ is the characteristic
function of the game.

Let us recall that constant sum homogeneous weighted majority games\footnote{At the origins of
game theory, homogeneous weighted majority games (h.w.m.g.)
have been introduced in \cite{VNM47} by Von Neumann-Morgenstern
and have been studied mainly under the constant sum condition.
Subsequent treatments in the absence of the constant sum
condition (with deadlocks) may be found e.g. in \cite{Ost87} by
Ostmann, who gave the proof that any h.w.m.g. (including non
constant sum ones) has a unique minimal homogeneous
representation, and in \cite{Ros87}. Generally speaking, the
homogeneous minimal representation is to be thought in a
broader sense but hereafter the restrictive application
concerning the constant sum case is used.} are
simple ($v(S)=0$ or 1), constant sum ($v(S)+v(\widetilde{S})=1$) $n$ person games, which admit a minimal
homogeneous representation $(q,\w)$ in which
$q=\cfrac{1+w(N)}{2}$ is the minimal winning quota $(v(S)=1
\Leftrightarrow w(S)=\sum_{j\in S}w_j\geq q$), and $\w$ is an
ordered vector of individual weights $(w_j\leq w_{j+1})$ such
that all weights are integer, there are players with weight 1
and for any coalition $S$ minimal winning\footnote{A coalition $S$ is said to be minimal winning if $v(S)=1$ and, for any $T\varsubsetneq S$, $v(T)=0$. The set of minimal winning coalitions is denoted by $WM$.}, $w(S)=q$, which
implies for such coalitions $w(S)-w(\widetilde{S})=1$.

The number of minimal winning coalitions of a $n$ (non dummy
and greater than 3) person constant sum homogeneous weighted majority game
may be either greater or equal, but not lower than $n$ (see \cite{Isb56}, p. 185).
Parsimonious games are the subset of constant sum homogeneous weighted
majority games with exactly $n$ minimal winning coalitions.

Players of a $P$ game may be divided in $h$ $(2\leq h\leq n-2)$
subsets, grouping players with the same individual weight.

An alternative, very important in our treatment, representation
of a $P$ game is the ordered vector
$\x=(x_1,\ldots,x_t,\ldots,x_{h})$, whose component $x_t$ is the
number of players of type $t$ (i.e. with common individual
weight $w_j=w_t$ for any player $j$ of type $t$) in the game.
The group labelling is coherent with the increasing weight convention
that $w_{t-1}< w_t$. The following lower bounds and constraints (\cite{Isb56}, p.
185) hold on $\x$: $x_1$ and $x_{h-1}>1,$ $x_h =1$.

We suggest to call
type representation of a $P$ game the vector $\x$, and free
type representation the vector
$_f\x=(x_1,\ldots,x_t,\ldots,x_{h-1})$ obtained by deleting the
last component of $\x$.

Moreover, we define $k-$uniform $P$ (henceforth $k-UP$) games, the subset of $P$ games, whose free type representation is uniform at level $k$, i.e.:
\begin{equation}\label{Eqn:kUP}
    x_t=k=\cfrac{n-1}{h-1}\quad \text{for any } t=1,\ldots,h-1
\end{equation}

A simple recursion rule (see \cite{PPZ1}, sect. 2, formula 2.1)
gives the type weights of a $P$ game starting from the initial
conditions $w_0=0$ and
$w_1=1$:
\begin{subequations}\label{Eqn:21}
\begin{align}
w_0&=0\label{Eqn21:first}\\
w_1&=1\label{Eqn21:second}\\
w_t&=x_{t-1}\cdot w_{t-1}+ w_{t-2}, \quad \forall 1<t<h\label{Eqn21:third}\\
w_h&=(x_{h-1}-1)\cdot w_{h-1}+w_{h-2}\label{Eqn21:forth}
\end{align}
\end{subequations}

\begin{rem}\label{rem:21new}
We underline that \eqref{Eqn21:third} jointly with the initial conditions \eqref{Eqn21:first} and \eqref{Eqn21:second} describe a second order linear homogeneous finite difference equation, whose recursive solution gives the type weights $w_t$ of all, but the top, players. Clearly, the equation does not have constant coefficients, except in the $k-UP$ case in which (initial conditions unchanged):
\begin{subequations}\label{Eqn:21bis}
\begin{align}
w_0&=0\label{Eqn21bis:first}\\
w_1&=1\label{Eqn21bis:second}\\
w_t&=k\cdot w_{t-1}+ w_{t-2}, \quad \forall 1<t<h\label{Eqn21bis:third}\\
w_h&=(k-1)\cdot w_{h-1}+w_{h-2}\label{Eqn21bis:forth}
\end{align}
\end{subequations}
\end{rem}

\section{The minimal winning quota in $P$ games}\label{sect:3}



Let us consider for any $h\geq 2$ the two sequences $q_h$ and
$w_h$ defined respectively as the minimal winning quota of a
$P$ game with exactly $h$ types of players and the type weight
 of type $h$ players in a $P$ game with more than $h$ types.

\begin{rem}
Hereafter it is convenient to distinguish the expressions of
$w_h$ coming from the previous definition, from the ones,
denoted by $w_h^*$, representing the weight of the top player
in a $P$ game with exactly $h$ types.
\end{rem}

Coherently with such definitions, for any $h$ we have by
\eqref{Eqn21:third}:
\begin{equation}\label{Eqn:31a}
    w_h=x_{h-1}\cdot w_{h-1}+w_{h-2}
\end{equation}
while by \eqref{Eqn21:forth} we know that:
\begin{equation}\label{Eqn:32a}
    w_h^{*}=(x_{h-1}-1)\cdot w_{h-1}+w_{h-2}
\end{equation}

Moreover, keeping account that in any $P$ game the coalition
made by the top player and one of the last but top
(see \cite{PPZ2}, prop. 3.1) is
minimal winning, it is: for any $h\geq 2$:
\begin{equation}\label{Eqn:33a}
    q_h=w_h^{*}+w_{h-1}=x_{h-1}\cdot w_{h-1}+w_{h-2}
\end{equation}

If we could choose the following initial conditions on the
function $q_h$:
\begin{equation}\label{Eqn:34a}
    q_0=w_0=0
\end{equation}
\begin{equation}\label{Eqn:35a}
    q_1=w_1=1
\end{equation}
this choice, jointly with \eqref{Eqn:31a} and \eqref{Eqn:33a},
would imply by immediate induction on $h$:
\begin{equation}\label{Eqn:36a}
    q_h=w_h\quad \text{$\forall h$}
\end{equation}
and by substitution in \eqref{Eqn:33a}:
\begin{equation}\label{Eqn:37a}
    q_h=x_{h-1}\cdot q_{h-1}+q_{h-2}
\end{equation}
and keeping account of the initial conditions on $q_h$ the
following result would hold:

\begin{teo}\label{teo:31a}
The minimal winning quota $q_h$ of any $P$ game with $h\geq 2$
types of players satisfies the second order, linear,
homogeneous, nonconstant coefficients, finite difference
equation \eqref{Eqn:37a} with initial conditions
\eqref{Eqn:34a}, \eqref{Eqn:35a}.
\end{teo}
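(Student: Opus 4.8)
The plan is to verify that the two proposed initial conditions \eqref{Eqn:34a} and \eqref{Eqn:35a} are legitimate, and then to carry out exactly the induction that the excerpt has already sketched. The only genuine content is checking that setting $q_0=0$ and $q_1=1$ is consistent with the combinatorial meaning of $q_h$; once that is granted, equations \eqref{Eqn:31a}, \eqref{Eqn:33a} and a one-line induction deliver \eqref{Eqn:36a} and then \eqref{Eqn:37a}, whence the theorem. So the proof is essentially an induction whose base cases must be justified by going back to the definition of a $P$ game.

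First I would treat the base of the induction. For $h=2$, formula \eqref{Eqn:33a} reads $q_2=x_1\cdot w_1+w_0=x_1$, using $w_1=1$, $w_0=0$ from \eqref{Eqn21:first}--\eqref{Eqn21:second}; I would check that this matches the known minimal winning quota of the (unique up to $x_1$) two-type $P$ game, so that the recursion \eqref{Eqn:37a} at $h=2$ indeed requires $q_0=0$ and $q_1=1$. In other words, the ``fictitious'' values $q_0,q_1$ are \emph{defined} by \eqref{Eqn:34a}--\eqref{Eqn:35a}, and the point to verify is that with this definition \eqref{Eqn:37a} is true for $h=2$ (and, if one wants a cleaner induction, also for $h=3$, where \eqref{Eqn:33a} gives $q_3=x_2 w_2+w_1=x_2 x_1+1$ and one checks this equals the true quota of a three-type $P$ game). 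This is the step I expect to be the main obstacle, not because it is deep but because it is the only place where one must argue from the structure of $P$ games rather than from algebra already in hand; everything depends on correctly identifying the minimal winning quota of the smallest games and on the fact, quoted from \cite{PPZ2}, that the top player together with one last-but-top player forms a minimal winning coalition.

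Next I would run the induction proper. Assume $q_{h-1}=w_{h-1}$ and $q_{h-2}=w_{h-2}$, where on the right $w_{h-1},w_{h-2}$ are the type-weight values given by the recursion \eqref{Eqn21:third} (equivalently \eqref{Eqn:31a}) with the initial conditions \eqref{Eqn21:first}--\eqref{Eqn21:second}. Then by \eqref{Eqn:33a},
\begin{equation*}
q_h=x_{h-1}\cdot w_{h-1}+w_{h-2}=x_{h-1}\cdot q_{h-1}+q_{h-2},
\end{equation*}
and by \eqref{Eqn:31a} the middle expression equals $w_h$; hence $q_h=w_h$ and \eqref{Eqn:37a} holds at $h$. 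This closes the induction, establishing \eqref{Eqn:36a} for all $h$ and \eqref{Eqn:37a} for all $h\ge 2$.

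Finally I would collect the pieces: \eqref{Eqn:37a} is a second order linear homogeneous finite difference equation in $h$ whose coefficient $x_{h-1}$ varies with $h$ (so the coefficients are nonconstant in general, and constant equal to $k$ precisely in the $k-UP$ case by \eqref{Eqn:kUP}), and its initial conditions are \eqref{Eqn:34a}--\eqref{Eqn:35a}; this is exactly the assertion of Theorem \ref{teo:31a}. I would also remark that the recursion together with $q_0=0<q_1=1$ and $x_t\ge 1$ forces $q_h$ to be a strictly increasing sequence of positive integers, which retrospectively confirms that the chosen initial values are the only ones compatible with $q_h$ being a genuine (positive, integer) winning quota — a small sanity check worth stating but not logically needed for the theorem.
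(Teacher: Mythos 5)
Your proposal is correct and takes essentially the same route as the paper: both arguments rest on the relation $q_h=w_h^{*}+w_{h-1}=x_{h-1}\,w_{h-1}+w_{h-2}$ coming from the minimal winning coalition formed by the top player and one last-but-top player, an immediate induction identifying $q_h$ with $w_h$ so that \eqref{Eqn:37a} follows, and a justification of the fictitious initial values $q_0=0$, $q_1=1$ through the true small cases $q_2=x_1$ and $q_3=1+x_1x_2$. The paper merely reverses the presentation (it derives $q_2$, $q_3$ by direct reasoning on the games and then observes that \eqref{Eqn:34a}--\eqref{Eqn:35a} is the unique pair consistent with the recursion, whereas you posit these values and verify consistency), which is the same content.
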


It remains to justify the initial conditions, which \textit{per
se} are meaningless, as there are no $P$ games with less than
two types (see footnote 2, p. 3); yet this choice
is the unique coherent with the true expressions of the really
substantial initial conditions:
\begin{equation}\label{Eqn:38a}
    q_2=x_1
\end{equation}
\begin{equation}\label{Eqn:39a}
    q_3=1+x_1x_2
\end{equation}
easily derived by direct reasoning as follows.

In a $P$ game
with $h=2$ there are $x_1\geq 3$ players of type 1, with weight
1, and one top player of type 2 with weight $x_1-1$; one of the
minimal winning coalitions is made by one player of type 1 and
the top player, hence by \eqref{Eqn:21} $q_2=x_1$ is the
expression of the minimal winning quota of a $P$ game with
$h=2$ types.

In $P$ games with $h=3$ there are $x_1$ players of
type 1 (type weight $w_1 =1$), $x_2$ players of type 2 (type
weight $w_2=x_1$) and the type 3, top player, with weight
$(x_2-1)\cdot w_2+w_1=1-x_1+x_1x_2$. One of the minimal winning
coalitions is made by one player of type 2 and the top player,
hence the minimal winning quota of $P$ games with $h=3$ types
is $q_3= x_1 +1- x_1+ x_1x_2=1+x_1x_2$. After that, elementary
algebra gives \eqref{Eqn:34a} and \eqref{Eqn:35a} as the unique
solution of the system of \eqref{Eqn:37a}, \eqref{Eqn:38a},
\eqref{Eqn:39a}, and Theorem \ref{teo:31a} has been proved.

\begin{rem}\label{Rem:32old}
In the case of $k-UP$, formula \eqref{Eqn:37a} becomes:
\begin{equation}\label{Eqn:37new}
    q_h=k\cdot q_{h-1}+q_{h-2}
\end{equation}
\end{rem}

\section{Fibonacci polynomials and $k-$Fibonacci sequences}\label{sect:4}

\begin{defi}
The sequence of polynomials in the real variable $x$ defined by the recursive relation:
\begin{equation}\label{Eqn:poly1}
    P_n(x)=x\cdot P_{n-1}(x)+P_{n-2}(x)
\end{equation}
with initial conditions $P_0(x)=0$ and $P_1(x)=1$ for any $x$
is known as the sequence of Fibonacci polynomials (see \cite{HoBi73}.
\end{defi}

\begin{rem}
It is immediate to check that the evaluation of the values $P_n(1)$ of the sequence at $x=1$ gives the sequence of Fibonacci numbers $P_n(1)=F_n$.
\end{rem}
The following generalization of this result has been given:

\begin{defi}
For any given positive integer $k$, the sequence $P_n(k)$ is defined as the sequence of $k-$Fibonacci numbers or shortly $k-$Fibonacci sequence (see \cite{FalPla07}). The notation $P_n(k)=F_n(k)$ may be used.
\end{defi}

\begin{rem}\label{rem:42new}
$P_n(k)$ is the solution of the second order finite difference equation with constant coefficients:
\begin{equation}\label{Eqn:poly2}
    P_n(k)=k\cdot P_{n-1}(k)+P_{n-2}(k)
\end{equation}
with initial conditions $P_0(k)=0$ and $P_1(k)=1$.
\end{rem}

In particular, the $1-$Fibonacci sequence is just the classic
one, the $2-$Fibonacci sequence (0, 1, 2, 5, 12, 29, 70,...) is
known as the Pell sequence, the $3-$Fibonacci
sequence is $(0,1,3,10,33,109,\ldots)$ and so on.

\section{$k-$Fibonacci sequences and the minimal winning quota in $k-UP$ games}\label{sect:5}

Remarks \ref{rem:21new}, \ref{Rem:32old} as well as \ref{rem:42new} make clear that the problem of finding the sequence of weights (or respectively the sequence of minimal winning quotas) of $k-UP$ games is isomorphic to the one of solving the corresponding constant coefficients $k-$Fibonacci equation.
Then the following results hold:
\begin{teo}\label{teo:51new}
The sequence of all but the top type weights of a $k-UP$ game $G$ with $h$ types is the sequence of the first $h-1$ $k-$Fibonacci numbers; formally:
$$w_t(k)=F_t(k) \quad \text{for } t=1,\ldots,h-1$$
\end{teo}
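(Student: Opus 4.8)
The plan is to exploit the isomorphism between the weight recursion for $k-UP$ games and the $k-$Fibonacci recursion, which is exactly what Remarks \ref{rem:21new} and \ref{rem:42new} set up. First I would observe that by \eqref{Eqn:21bis}, the type weights $w_t$ of a $k-UP$ game with $h$ types satisfy, for all $1<t<h$, the recurrence $w_t=k\cdot w_{t-1}+w_{t-2}$, with the initial conditions $w_0=0$ and $w_1=1$. By Remark \ref{rem:42new}, the $k-$Fibonacci sequence $F_t(k)$ satisfies the identical recurrence $F_t(k)=k\cdot F_{t-1}(k)+F_{t-2}(k)$ with the identical initial conditions $F_0(k)=0$, $F_1(k)=1$.

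The core of the argument is then a straightforward induction on $t$. The base cases $t=0$ and $t=1$ hold by the coincidence of initial conditions: $w_0=0=F_0(k)$ and $w_1=1=F_1(k)$. For the inductive step, fix $t$ with $1<t<h$ and assume $w_{t-1}(k)=F_{t-1}(k)$ and $w_{t-2}(k)=F_{t-2}(k)$; then
\begin{equation*}
w_t(k)=k\cdot w_{t-1}(k)+w_{t-2}(k)=k\cdot F_{t-1}(k)+F_{t-2}(k)=F_t(k),
\end{equation*}
using \eqref{Eqn21bis:third} for the first equality, the inductive hypothesis for the second, and Remark \ref{rem:42new} for the third. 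This closes the induction and yields $w_t(k)=F_t(k)$ for all $t=1,\ldots,h-1$, as claimed. Note that the range stops at $h-1$ precisely because \eqref{Eqn21bis:third} governs $w_t$ only for $t<h$; the top weight $w_h$ obeys the different rule \eqref{Eqn21bis:forth} and is deliberately excluded from the statement.

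There is essentially no obstacle here: the only thing to be careful about is bookkeeping the index range, i.e. making sure the recurrence \eqref{Eqn21bis:third} is being applied only where it is valid ($1<t<h$), so that the conclusion is asserted only for $t$ up to $h-1$. Since both sequences are defined by a second-order linear recurrence with two matching initial values, they agree term by term for as long as both recurrences coincide, and that is the whole content of the theorem.
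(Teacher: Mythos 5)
Your proof is correct and follows the same route as the paper, which simply notes that the recursion \eqref{Eqn21bis:third} with initial conditions \eqref{Eqn21bis:first}--\eqref{Eqn21bis:second} coincides with the $k$-Fibonacci recursion of Remark \ref{rem:42new}; you merely make the implicit induction explicit and correctly restrict the index range to $t<h$, excluding the top weight governed by \eqref{Eqn21bis:forth}.
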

\begin{proof}
Immediate, by remark \ref{rem:42new} the solution of \eqref{Eqn21bis:third} with initial conditions \eqref{Eqn21bis:first} and \eqref{Eqn21bis:second} is $w_t(k)=F_t(k)$.
\end{proof}
\begin{teo}\label{teo:52new}
The minimal winning quota of a $k-UP$ game $G$ with $h$ types is $q_h(k)=F_h(k)$.
\end{teo}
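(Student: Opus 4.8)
The plan is to reduce the claim to the uniqueness of the solution of a second-order linear recurrence with prescribed initial data, exactly in the spirit of the proof of Theorem \ref{teo:51new}. First I would invoke Theorem \ref{teo:31a}, which already establishes that in \emph{any} $P$ game the minimal winning quota $q_h$ obeys the difference equation \eqref{Eqn:37a} together with the initial conditions $q_0=0$, $q_1=1$. Specializing to a $k-UP$ game, Remark \ref{Rem:32old} replaces the nonconstant coefficient $x_{h-1}$ by the constant $k$, so that $q_h$ satisfies \eqref{Eqn:37new}, i.e. $q_h=k\cdot q_{h-1}+q_{h-2}$, with the \emph{same} initial conditions $q_0=0$, $q_1=1$.

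Next I would observe that this is precisely the defining data of the $k-$Fibonacci sequence recalled in Remark \ref{rem:42new}: $F_h(k)$ is the (unique) solution of $F_h(k)=k\cdot F_{h-1}(k)+F_{h-2}(k)$ with $F_0(k)=0$, $F_1(k)=1$. Since a second-order linear homogeneous recurrence with constant coefficients has a unique solution once its first two terms are fixed, a straightforward induction on $h$ yields $q_h(k)=F_h(k)$ for every $h$: the base cases $h=0,1$ are the matching initial conditions (or, if one prefers the genuinely meaningful anchors, $q_2=x_1=k=F_2(k)$ and $q_3=1+x_1x_2=1+k^2=F_3(k)$ via \eqref{Eqn:38a}--\eqref{Eqn:39a}), while the inductive step is nothing but the coincidence of the two recurrences.

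There is essentially no hard step here: all the substance has been front-loaded into Theorem \ref{teo:31a} and Remark \ref{Rem:32old} (derivation of the difference equation and of its initial conditions in the uniform case) and into Remark \ref{rem:42new} (the recursive characterization of $F_h(k)$). The only point worth a word of care is the status of the formal values $q_0,q_1$, which correspond to no actual $P$ game; but this was already dealt with in the discussion following Theorem \ref{teo:31a}, where $q_0=0$ and $q_1=1$ were shown to be forced by the substantive data $q_2$ and $q_3$. Consequently the argument is immediate and the theorem follows.
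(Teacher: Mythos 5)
Your argument is correct and coincides with the paper's own proof: the paper likewise proves Theorem \ref{teo:52new} by noting that $q_h$ satisfies the constant-coefficient recurrence \eqref{Eqn:37new} with initial conditions $q_0(k)=0$, $q_1(k)=1$, which is exactly the defining data of $F_h(k)$ in Remark \ref{rem:42new}, mimicking the proof of Theorem \ref{teo:51new}. Your extra remarks on the uniqueness of the solution and on anchoring the formal initial conditions via $q_2=k=F_2(k)$ and $q_3=1+k^2=F_3(k)$ only make explicit what the paper leaves implicit.
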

The proof mimics the one given for theorem \ref{teo:51new} keeping account of formula \eqref{Eqn:37new} and the initial conditions $q_0(k)=0$ and $q_1(k)=1$.

Besides the recursive relation \eqref {Eqn:poly2}, closed form formulae of the $k-$Fibonacci sequences have been given.
\begin{risu}\label{ris:binet}
The Binet's formula (see \cite{FalPla07}, p. 39):
\begin{equation}\label{Enq:binet}
    F_n(k)=\cfrac{[\alpha(k)]^n-[\beta(k)]^n}{\alpha(k)-\beta(k)}
\end{equation}
with $\alpha(k)=(k+(k^2+4)^{1/2})/2$ and $\beta(k)=(k-(k^2+4)^{1/2})/2$
the two solutions of the characteristic equation $x^2-kx-1=0$.
\end{risu}

\begin{risu}\label{ris:floor}
\begin{equation}\label{Enq:floor}
    F_n(k)=)=\sum_{j=0}^{\lfloor(n-1)/2\rfloor} C(n-1-j, j)\cdot k^{n-1-2j}
\end{equation}
with $\lfloor(n-1)/2\rfloor $ the greatest integer contained in $(n-1)/2$, and $C$ binomial coefficients.
\end{risu}

Hence it is:
\begin{equation}\label{Eqn:53k}
\begin{split}
F_1(k)&=1\\
F_2(k)&= k\\
F_3(k)&= k^2+1\\
F_4(k)&= k^3+2k\\
F_5(k)&= k^4+3k^2+1\\
F_6(k)&= k^5+4k^3+3k\\
.....
\end{split}
\end{equation}

Result \ref{ris:floor} gives a polynomial expansion of any number $F_n(k)$ of the $k-$Fibonacci sequence. On the basis of this result, it has been observed (see \cite{FalPla09}, sect. 2.1, table 3) that the sequence of the binomial coefficients of any $F_n(k)$ polynomial representation may be described by a modified Pascal triangle (named 2 Pascal triangle).

We suggest here the alternative equivalent polynomial expansion
of $F_n(k)$:
\begin{risu}\label{ris:floor2}
\begin{equation}\label{Enq:floor2}
    F_n(k)=\sum_{s=0}^{n-1} C^{'}(n,s)\cdot k^{s}
\end{equation}
with $C^{'}(n,s)$ integer coefficients satisfying:
\begin{subequations}\label{Eqn:5star}
\begin{align}
\text{for $n$ odd (even): }& C^{'}(n,0)=1 \text{ (=0)}\\
\text{for $n$ positive integer: }& C^{'}(n,n-1)=1\\
\text{for $n$ positive integer $>1$: }& C^{'}(n,n-2)=0\\
\text{for $n$ positive integer and $0<s<n-2$: }& C^{'}(n,s)=C^{'}(n-2,s)+C^{'}(n-1,s-1)\label{Eqn:5stard}
\end{align}
\end{subequations}
\end{risu}

\begin{rem}
On the basis of such rules the coefficients $C^{'}$ are resumed
by the following modified Pascal triangle, with index
$n=1,2,\ldots$ on the rows and $s=0,1,2,\ldots$ on columns.

\begin{table}[H]
  \begin{center}\label{Tab:Triangle2}\scriptsize
    \begin{tabular}{rr|rrrrrrrrrr}
    \multicolumn{12}{c}{$s$}\\
     & & 0 & 1 & 2 & 3 & 4 & 5 & 6 & 7 & 8 & ...\\
    \hline
    \multirow{7}{0.1cm}{\rotatebox{90}{~\parbox{0.5cm}{$n$}~}} &  1 & 1 &  & & & & \\
       &  2 & 0 & 1 & & & & \\
       &  3 & 1 & 0 & 1 \\
       &  4 & 0 & 2 & 0 & 1\\
       &  5 & 1 & 0 & 3 & 0 & 1\\
       &  6 & 0 & 3 & 0 & 4 & 0 & 1\\
       &  7 & 1 & 0 & 6 & 0 & 5 & 0 & 1\\
       &  8 & 0 & 4 & 0 & 10 & 0 & 6 & 0 & 1\\
       &  ... & ... & ... & ... & ... & ... & ... & ... & ... & ...\\
      \hline
  \end{tabular}
\end{center}
\caption{Modified Pascal Triangle (MPT1) of the coefficients
$C^{'}(n,s)$ of the polynomial representation
$F_n(k)=\sum_{s=0}^{n-1} C^{'}(n,s)\cdot k^{s}$.}
\end{table}
\end{rem}

The rule governing the triangle is very simple: the first
column alternates 1 and 0, the main (external) diagonal has all
elements equal to 1; the diagonal under the main has all
elements equal to 0, all the other internal coefficients follow
the recursive rule \eqref{Eqn:5stard}.

\begin{rem}
As revealed by the notation, the coefficients $C^{'}(n,s)$ do not depend on $k$, i.e. are the same for any $k$.
\end{rem}

\begin{rem}
The sum of the binomial coefficients of the row $n$ (as said
before independent from $k$) is equal to the $n-$th Fibonacci
number; formally: $\sum_{s=0,\ldots,n-1} C^{'}(n,s)=F_n$.
\end{rem}

We will see in section \ref{sect:6}, that this modified Pascal
triangle plays a key role in finding a polynomial expansion of
the minimal winning quota for non $UP$ games.

\section{Fibonacci polynomials and the minimal winning quota of non $UP$ games}\label{sect:6}

In this section we will argue that the minimal winning quota of
a $P$ game not $U$ with $h$ types may be thought as a
generalized version of $F_h(k)$. A key role in this reasoning
is played by the modified Pascal triangle MPT1.

We have been inspired by the following intuition:
\begin{prop}
In any $P$ game not $U$ with $n$ players and $h$ types, the
minimal winning quota $q_h(x_1,x_2,\ldots,x_{h-1})$ should be
given by a polynomial expansion isomorphic to the one found in
the corresponding $k-UP$ game $(k=(n-1)/(h-1))$ with the same
number of players and types.
\end{prop}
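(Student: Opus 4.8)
The plan is to produce explicitly the polynomial that solves the second order recurrence \eqref{Eqn:37a} with the initial data \eqref{Eqn:34a}, \eqref{Eqn:35a}, and then to verify that specialising it at $x_1=\cdots=x_{h-1}=k$ gives back the $k-UP$ expansion $\sum_{s}C'(h,s)\,k^s=F_h(k)$ of Result \ref{ris:floor2}; this last point is the precise content of ``a polynomial expansion isomorphic to the one found in the corresponding $k-UP$ game''. Call a set $S\subseteq\{1,\dots,h-1\}$ \emph{feasible} when its complement $\{1,\dots,h-1\}\setminus S$ is a disjoint union of pairs of consecutive integers (equivalently, every maximal run of indices missing from $S$ has even length), write $\mathcal F(h,s)$ for the feasible sets of cardinality $s$, and set
\[
Q_h(x_1,\dots,x_{h-1})\;=\;\sum_{s=0}^{h-1}\ \sum_{S\in\mathcal F(h,s)}\ \prod_{i\in S}x_i .
\]
The goal is then $Q_h=q_h$, together with the specialisation identity.

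First I would check that $Q_h$ satisfies \eqref{Eqn:37a}. Split $\mathcal F(h,s)$ according to whether the top index $h-1$ lies in $S$. If $h-1\in S$, then $S\setminus\{h-1\}\in\mathcal F(h-1,s-1)$, and conversely; these terms add up to $x_{h-1}\,Q_{h-1}$. If $h-1\notin S$, then $h-1$ is the right end of a maximal missing run, which has even length, so $h-2\notin S$ too; then $S$, regarded as a subset of $\{1,\dots,h-3\}$, lies in $\mathcal F(h-2,s)$, and conversely; these terms add up to $Q_{h-2}$. Hence $Q_h=x_{h-1}Q_{h-1}+Q_{h-2}$ for $h\ge 2$ (the second case being vacuous at $h=2$). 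The initial values come straight from the definition: $Q_0=0$ (empty sum), $Q_1=1$ (only $S=\varnothing$), $Q_2=x_1$ (only $S=\{1\}$, since $S=\varnothing$ leaves the odd run $\{1\}$), which match \eqref{Eqn:34a}, \eqref{Eqn:35a}, \eqref{Eqn:38a}. Since a second order linear recurrence is determined by its first two values, $Q_h=q_h$ for every $h\ge 0$.

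Next I would identify the coefficients with those of MPT1. Setting $x_1=\cdots=x_{h-1}=k$, each feasible $S$ of size $s$ contributes $k^s$, so $q_h(k,\dots,k)=\sum_{s}N(h,s)\,k^s$ with $N(h,s):=\#\mathcal F(h,s)$. The same case split gives $N(h,s)=N(h-1,s-1)+N(h-2,s)$, while directly from the definition of feasibility one gets $N(h,h-1)=1$, $N(h,h-2)=0$, and $N(h,0)=1$ or $0$ according as $h$ is odd or even --- precisely the relations \eqref{Eqn:5star}--\eqref{Eqn:5stard} defining $C'(h,s)$. Therefore $N(h,s)=C'(h,s)$, so $q_h(k,\dots,k)=\sum_s C'(h,s)\,k^s=F_h(k)$, consistently with Theorem \ref{teo:52new}. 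This is the asserted isomorphism: $q_h(x_1,\dots,x_{h-1})$ is obtained from the $k-UP$ polynomial $\sum_s C'(h,s)\,k^s$ by replacing each ``$C'(h,s)$ copies of $k^s$'' with ``the sum of the $C'(h,s)$ feasible degree-$s$ monomials in $x_1,\dots,x_{h-1}$''; in particular the polynomial expansion of $q_h$ has $\sum_s C'(h,s)=F_h$ addends.

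The step I expect to be the real obstacle is not a computation but the correct choice of the feasibility rule and the proof that ``delete $h-1$'' and ``delete $\{h-2,h-1\}$'' set up a genuine bijection $\mathcal F(h,s)\leftrightarrow\mathcal F(h-1,s-1)\sqcup\mathcal F(h-2,s)$; everything hinges on the elementary but crucial fact that the extreme right index, when missing from $S$, can never be an isolated missing index. One should also treat the degenerate small cases $h=0,1,2$ with a little care so that the recurrence is valid from the start; once that and the bijection are in place, the rest is just uniqueness of the solution of a linear recurrence together with the routine check that $N(h,s)$ and $C'(h,s)$ obey identical recurrences and boundary data.
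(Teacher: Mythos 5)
Your proposal is correct and follows essentially the paper's own route: splitting each feasible monomial according to whether the top index $h-1$ occurs reproduces exactly the decomposition $q_h=q_{h,1}+q_{h,2}$ with $q_{h,1}=x_{h-1}\cdot q_{h-1}$ and $q_{h,2}=q_{h-2}$ used in the proof of Theorem \ref{teo:32}, and your feasibility rule (every maximal run of missing indices has even length) is equivalent to the constraints \eqref{Eqn:37bis} together with the parity requirement on $s$, since the three run conditions translate precisely into ``$i_1$ odd'', ``$i_j+i_{j+1}$ odd'' and ``$i_s\equiv h-1 \ (\mathrm{mod}\ 2)$''. The only differences are the welcome explicit verification that the count $N(h,s)$ of feasible sets satisfies the MPT1 recursion and boundary values, hence equals $C^{'}(h,s)$ --- a point the paper asserts but does not prove --- and a harmless slip at the degenerate value $h=0$, where a literal reading of your definition gives $Q_0=1$ rather than $0$, which is immaterial because your induction is anchored at $Q_1=1$ and $Q_2=x_1$.
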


More precisely, in such an expansion the number of addends,
obtained as feasible products of exactly $s$ factors chosen
from the free type representation vector, should be given (for
any $s$) by the same coefficient $C^{'}(h,s)$ which multiplies
$k^s$ (in formula \eqref{Enq:floor2}) in the polynomial
expansion of the $k-U$ case.

This way each one of the feasible products of exactly $s$
factors in a $P$ game not $U$ may be thought as the counterpart
of one of the products $k^s$ in a $UP$ game.

Let us comment here a simple example. Consider a $UP$ game $G$
with $n$ players and $h=5$ types with level $k=(n-1)/(h-1)$.
For such a game it is (coherently with the coefficients of row $n=5$ of MPT1 in Table 1):
\begin{equation}\label{Eqn:1pall}
    q_5(k)=F_5(k)=1\cdot k^0+3\cdot k^2+1\cdot k^4=1+kk+kk+kk+kkkk
\end{equation}
For a general $P$ game $G^{'}$ not $U$ with the same number of
players as well as of types of $G$, so that
$$
\sum_{t=1}^{h-1}x_t=n-1
$$
it is (as proved in the next section):
\begin{equation}\label{Eqn:2pall}
    q_5(x_1,x_2,x_3,x_4)=1+x_1\cdot x_2+x_1\cdot x_4+x_3\cdot x_4+x_1\cdot x_2\cdot x_3\cdot x_4
\end{equation}

A comparison between \eqref{Eqn:1pall} and \eqref{Eqn:2pall}
reveals that the three addends $(C^{'}(5,2)=3)$ $kk$ in
\eqref{Eqn:1pall} have been substituted in \eqref{Eqn:2pall} by
the three feasible addends which are the product of exactly two
different factors (chosen among the $x_1$, $x_2$, $x_3$, $x_4$
coherently with the rules in theorem \ref{teo:32}), while the
unique addend $(C^{'}(5,4)=1)$ $kkkk$ has been replaced by the
unique feasible product of four factors; moreover there is also
one addend $(C^{'}(5,0)=1)$, counterpart of $k^0$, which
requires to think that the fictitious product of zero factors
is just equal to one.

Hence the conclusion that $q_5(x_1,x_2,x_3,x_4)$ may be
considered a generalized version of $q_5(k)=F_5(k)$ as well as $q_h(x_1,x_2,\ldots,x_{h-1})$ is a generalized
version of $q_h(k)=F_h(k)$.

It remains to precise the rule governing the choice of the
factors for any feasible combination of $(h,s)$, i.e for which
$C^{'}(h,s)$ is positive, and to show that the expansion
provided by this rule gives exactly the minimal winning quota.

The answer to this couple of problems is given by the following
result which provides the required expansion:
\begin{teo}\label{teo:32}

\begin{subequations}\label{Eqn:37}
\begin{align}
\text{for $h$ even:}\quad q_h(_f\x)&=\sum_{s=1 \hspace{0.1cm}\text{mod}\hspace{0.1cm}2, s<h} \Pi_{j=1,\ldots,s} x_{i_{j}}\label{Eqn37:a}\\
\text{for $h$ odd:}\quad q_h(_f\x)&=1+\sum_{s=0\hspace{0.1cm}\text{mod}\hspace{0.1cm}2, s<h} \Pi_{j=1,\ldots,s} x_{i_{j}}\label{Eqn37:b}
\end{align}
\end{subequations}
sub (in both cases) to the set of constraints on $i_j$:
\begin{subequations}\label{Eqn:37bis}
\begin{align}
i_1&=1 \hspace{0.1cm}\text{mod}\hspace{0.1cm} 2\label{Eqn37bis:a}\\
i_j&<i_{j+1}\qquad\qquad\quad\text{for $j=1,\ldots,s-1$}\label{Eqn37bis:b}\\
i_j&+i_{j+1}=1 \hspace{0.1cm}\text{mod}\hspace{0.1cm} 2 \quad\text{for $j=1,\ldots,s-1$}\label{Eqn37bis:c}\\
i_s&<h    \label{Eqn37bis:d}
\end{align}
\end{subequations}

\end{teo}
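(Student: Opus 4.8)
The plan is to prove Theorem~\ref{teo:32} by induction on $h$, exploiting the recursion $q_h(_f\x)=x_{h-1}\cdot q_{h-1}(_f\x)+q_{h-2}(_f\x)$ from \eqref{Eqn:37a} together with the initial conditions $q_2=x_1$ and $q_3=1+x_1x_2$ established in Section~\ref{sect:3}. First I would reformulate the claimed formula combinatorially: for fixed $h$, let $\mathcal{F}_h$ denote the set of index-tuples $(i_1,\ldots,i_s)$ (including the empty tuple when $h$ is odd, contributing the addend $1$) satisfying the constraints \eqref{Eqn37bis:a}--\eqref{Eqn37bis:d}, i.e.\ strictly increasing sequences drawn from $\{1,\ldots,h-1\}$ that start at an odd index, alternate in parity (hence the parity of $s$ is forced by \eqref{Eqn37:a}/\eqref{Eqn37:b}), and end strictly below $h$. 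Then $q_h(_f\x)=\sum_{(i_1,\ldots,i_s)\in\mathcal{F}_h}\prod_{j=1}^{s}x_{i_j}$, and the whole theorem reduces to showing this sum obeys the same second-order recursion and the same base cases.

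The base cases $h=2$ and $h=3$ are a direct check against \eqref{Eqn:38a}--\eqref{Eqn:39a}: for $h=2$ the only admissible tuple is $(1)$, giving $x_1$; for $h=3$ we get the empty tuple plus $(1,2)$, giving $1+x_1x_2$. For the inductive step, assume the formula holds for $h-1$ and $h-2$ and split $\mathcal{F}_h$ according to whether the \emph{last} index $i_s$ equals $h-1$ or is at most $h-2$. If $i_s\le h-2$, the constraint $i_s<h$ is automatically the constraint $i_s<h-1$ would need except for the parity bookkeeping — here the key point is that the admissible tuples with all indices $\le h-2$ are \emph{exactly} $\mathcal{F}_{h-2}$, because deleting the top index shifts the parity of $s$ by zero relative to the $h\mapsto h-2$ change (both $h$ and $h-2$ have the same parity, so the parity class of $s$ in \eqref{Eqn37:a}/\eqref{Eqn37:b} is preserved, and the ceiling $i_s<h-2$ vs $i_s<h$ must be reconciled — I would argue that tuples with $i_s=h-2$ are excluded because $h-2$ has the opposite parity to... ) — more carefully: a tuple ending at $i_s\le h-2$ contributes to $q_h$ iff it lies in $\mathcal{F}_{h-2}$, yielding the $q_{h-2}$ term. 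If instead $i_s=h-1$, then since $h-1$ and $h$ have opposite parity and the alternation forces $i_{s-1}$ to have opposite parity to $i_s=h-1$, deleting $i_s$ leaves a tuple $(i_1,\ldots,i_{s-1})$ ending at $i_{s-1}\le h-2$ that is admissible for index $h-1$ (it starts odd, alternates, ends $<h-1$, and has the correct parity of length for $\mathcal{F}_{h-1}$); conversely every tuple in $\mathcal{F}_{h-1}$ extends uniquely by appending $h-1$. This bijection multiplies the product by $x_{h-1}$, yielding the $x_{h-1}\cdot q_{h-1}$ term, and the two cases together reproduce \eqref{Eqn:37a}.

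The main obstacle I anticipate is the parity bookkeeping in the case split: one must verify that appending $h-1$ sends the parity class of $s$ demanded by $\mathcal{F}_{h-1}$ exactly to the parity class demanded by $\mathcal{F}_h$ (length increases by one, $h$ changes parity, so the ``$s\equiv 1$'' vs ``$s\equiv 0$'' and the ``$+1$'' for odd $h$ have to line up), and similarly that the ``$i_s<h$'' ceiling together with the forced parity of $i_s$ makes the $i_s\le h-2$ tuples coincide with $\mathcal{F}_{h-2}$ rather than with something larger or smaller. Concretely, when $h$ is even the admissible $s$ are odd and $i_s$ can be $h-1$ (odd, same parity as required start after an even number of alternations... ) — I would handle this by fixing the convention that $i_1$ is odd, so $i_j\equiv j\pmod 2$ throughout a tuple, whence $i_s\equiv s\pmod 2$; then $i_s=h-1$ is compatible with the required parity of $s$ precisely when $h-1\equiv s$, which is automatic given the parity constraint on $s$ in \eqref{Eqn:37}. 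Once this parity dictionary is pinned down, both halves of the recursion fall out mechanically, and since \eqref{Eqn:37a} with the given base cases has a unique solution (Theorem~\ref{teo:31a}), the proof is complete. I would also remark, as a sanity check, that setting all $x_t=k$ collapses $\mathcal{F}_h$ to a count equal to $C^{'}(h,s)$ for each $s$, recovering \eqref{Enq:floor2} and Theorem~\ref{teo:52new}.
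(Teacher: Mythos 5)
Your proposal is correct and follows essentially the same route as the paper's own proof: induction on $h$ via the recursion $q_h=x_{h-1}q_{h-1}+q_{h-2}$ of Theorem \ref{teo:31a}, with the addends split according to whether the last index equals $h-1$ (giving $x_{h-1}\cdot q_{h-1}$ via the append/delete bijection) or is at most $h-3$ (giving $q_{h-2}$). Your explicit parity dictionary $i_j\equiv j\pmod 2$, which rules out $i_s=h-2$ and makes the ceiling $i_s<h-2$ in the second block automatic, is a welcome clarification of a point the paper leaves implicit in its definition of $q_{h,2}$.
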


\begin{rem}
In \eqref{Eqn37:a} and \eqref{Eqn37:b} all sequences respecting
the constraints are feasible and appear just once (no
repetition of sequences) in the expressions of  $q_h(_f\x)$.
\end{rem}
\begin{rem}
For $h$ even (odd) \eqref{Eqn:37} and \eqref{Eqn:37bis} imply
that $i_1\cdot i_s= 1 \hspace{0.1cm}\text{mod}\hspace{0.1cm}
2$, i.e. $i_s$ is odd  ($i_1\cdot i_s= 0
\hspace{0.1cm}\text{mod}\hspace{0.1cm} 2$, i.e. $i_s$ is even).
\end{rem}
\begin{rem}
Denoting by $F_h$ the $h$-th Fibonacci number and by $n_h$ the
number of addends of $q_h$, it turns out that $n_2=1=F_2$ (by
\eqref{Eqn65:a}) and $n_3=2=F_3$ (by \eqref{Eqn65:b});
moreover the structure of the recursive relation
\eqref{Eqn:37a} makes clear that for any $h>3$,
$n_h=n_{h-1}+n_{h-2}$ so that (by induction on $h$) for any
$h$, $n_h=F_h$
\end{rem}
and the following proposition holds:
\begin{prop}
In any $P$ game with $h$ types the number of addends of the polynomial expansion of $q_h$
is the $h$-th Fibonacci number.
\end{prop}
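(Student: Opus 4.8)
The plan is to prove that the number $n_h$ of addends in the polynomial expansion of $q_h$ supplied by Theorem~\ref{teo:32} satisfies the Fibonacci recurrence with the correct seed values, and then to invoke induction on $h$; this is the strategy already announced in the Remark preceding the statement, so the task is to fill in three things: that the expansion of \eqref{Eqn:37} is already in reduced form (so that $n_h$ is an honest count of distinct terms), the base cases, and the recursive step. For the first point I would observe that, by \eqref{Eqn37bis:b}, every feasible index sequence is strictly increasing and hence is determined by its underlying set of indices; therefore distinct feasible sequences produce distinct squarefree monomials in the indeterminates $x_1,\dots,x_{h-1}$, and combined with the Remark that every feasible sequence appears exactly once, this shows there are no like terms to collect in \eqref{Eqn:37}. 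Thus $n_h$ equals the number of feasible sequences, with the empty product (the constant $1$, present when $h$ is odd) counted as one addend.

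For the base cases I would simply quote the substantial initial conditions already recorded: by \eqref{Eqn:38a} one has $q_2=x_1$, a single addend, so $n_2=1=F_2$; by \eqref{Eqn:39a} one has $q_3=1+x_1x_2$, two addends, so $n_3=2=F_3$.

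For the recursive step, fix $h\ge 4$ and use the polynomial identity \eqref{Eqn:37a} of Theorem~\ref{teo:31a}, namely $q_h=x_{h-1}\,q_{h-1}+q_{h-2}$, reading off term counts on the right-hand side. The constraint \eqref{Eqn37bis:d}, $i_s<h$, applied to the expansions of $q_{h-1}$ and $q_{h-2}$, shows that $q_{h-1}$ is a polynomial in $x_1,\dots,x_{h-2}$ only and $q_{h-2}$ a polynomial in $x_1,\dots,x_{h-3}$ only; in particular neither involves $x_{h-1}$. Consequently every monomial of $x_{h-1}\,q_{h-1}$ carries the factor $x_{h-1}$ whereas no monomial of $q_{h-2}$ does, so the two blocks of addends are disjoint and their union produces no cancellation and no coincidences; moreover, since the monomials of $q_{h-1}$ are already pairwise distinct and $x_{h-1}$-free, multiplication by $x_{h-1}$ is injective on them, so $x_{h-1}\,q_{h-1}$ contributes exactly $n_{h-1}$ addends and $q_{h-2}$ contributes $n_{h-2}$. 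Hence $n_h=n_{h-1}+n_{h-2}$, and induction gives $n_h=F_h$ for all $h\ge 2$.

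The one place that genuinely requires care — the main obstacle — is the passage from the polynomial recursion \eqref{Eqn:37a} to a recursion on the number of terms: this is legitimate precisely because all coefficients in the expansions of Theorem~\ref{teo:32} are $+1$ and because $q_{h-1}$ and $q_{h-2}$ are supported on $x_1,\dots,x_{h-2}$ and $x_1,\dots,x_{h-3}$ respectively, which rules out any merging. If one wished to avoid leaning on the $+1$ coefficients, the same recurrence can be obtained directly from \eqref{Eqn:37bis}: partition the feasible sequences for $q_h$ according to whether the last index $i_s$ equals $h-1$ or is at most $h-3$; using the parity and ordering constraints \eqref{Eqn37bis:a}--\eqref{Eqn37bis:d} one checks that deleting a trailing $i_s=h-1$ is a bijection onto the feasible sequences for $q_{h-1}$ (with the empty sequence matching the constant term of $q_{h-1}$), while the sequences with $i_s\le h-3$ are exactly the feasible sequences for $q_{h-2}$, again yielding $n_h=n_{h-1}+n_{h-2}$.
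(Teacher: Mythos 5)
Your proposal is correct and follows essentially the same route as the paper: the paper's own argument (given in the remark preceding the proposition) is exactly the induction $n_2=1=F_2$, $n_3=2=F_3$, and $n_h=n_{h-1}+n_{h-2}$ read off from the recursion $q_h=x_{h-1}q_{h-1}+q_{h-2}$ of \eqref{Eqn:37a}. Your only addition is to make explicit why no like terms can merge (all coefficients are $+1$ and $q_{h-1}$, $q_{h-2}$ are free of $x_{h-1}$), a detail the paper leaves implicit in the phrase ``the structure of the recursive relation makes clear.''
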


Hence Theorem \ref{teo:32} says that $q_h$ is given by a sum of
$F_h$ addends. For $h$ even each of these addends is the
product of an odd number $s$, smaller than $h$, of components
of the free type representation $_f\x$ whose sequence of
indices $i_1,i_2,\ldots,i_j,\ldots,i_s$ satisfy constraints of
first index odd \eqref{Eqn37bis:a},  strict monotony
\eqref{Eqn37bis:b}, for $s>1$ alternation of parity
\eqref{Eqn37bis:c} and last index obviously lower than $h$
\eqref{Eqn37bis:d}. For $h$ odd, $s$ is even and the constant 1
appears as one of the $F_h$ addends, while the constraints do
not change.

We list here the first polynomial expansion of $q_h$
analogous to \eqref{Eqn:53k}.
\begin{subequations}\label{Eqn:65}
\begin{align}
q_2&= x_1\label{Eqn65:a}\\
q_3&= 1+x_1x_2\label{Eqn65:b}\\
q_4&= x_1+ x_3+ x_1x_2x_3\label{Eqn65:c}\\
q_5&= 1+ x_1x_2+x_1x_4+ x_3x_4+ x_1x_2x_3x_4\label{Eqn65:d}\\
q_6&=  x_1+ x_3+ x_5 + x_1x_2 x_3+ x_1x_2 x_5+ x_1x_4 x_5+
x_3x_4 x_5 +x_1x_2 x_3 x_4 x_5 \label{Eqn65:e}
\end{align}
\end{subequations}

Now we are going to give the proof of Theorem \ref{teo:32}, but
with a preliminary warning. In the proof we will denote by s.c.
the (standard) constraints \eqref{Eqn37bis:a},
\eqref{Eqn37bis:b} and \eqref{Eqn37bis:c} which survive
unchanged in all formulas, while on the contrary writing
explicitly the updated version of \eqref{Eqn37bis:d} regarding
the constraint on the last index.

\begin{proof}
The theorem clearly holds for $h=2$ as the only feasible $s$
odd, lower than 2, is 1 (and hence $q_2=x_1$), and for $h=3$,
for which the only feasible $s$ even is 2 and hence
$q_3=1+x_1x_2$. Then, we proceed by induction: suppose that the
theorem holds for some values $h-2$ and $h-1$, check that it is
satisfied also for $h$, hence it holds for any positive
integer.\\

Case $h$ even. Let us write\\
\begin{equation}\label{eqn:38}
    q_h= q_{h,1}+q_{h,2}
\end{equation}
with:
\begin{equation}\label{eqn:39}
    q_{h,1}=\sum_{s=1 \hspace{0.1cm}\text{mod}\hspace{0.1cm}2, s<h} \Pi_{j=1,\ldots,s} x_{i_{j}}
\end{equation}
sub to
\begin{subequations}\label{Eqn:39bis}
\begin{align}
s.c.\label{Eqn39:a}\\
i_s=h-1    \label{Eqn39:b}
\end{align}
\end{subequations}
and:
\begin{equation}\label{eqn:310}
    q_{h,2}=\sum_{s=1 \hspace{0.1cm}\text{mod}\hspace{0.1cm}2, s<h-2} \Pi_{j=1,\ldots,s} x_{i_{j}}
\end{equation}
sub to
\begin{subequations}\label{Eqn:310bis}
\begin{align}
s.c.\label{Eqn310:a}\\
i_s<h-2    \label{Eqn310:b}
\end{align}
\end{subequations}

Thus in \eqref{eqn:38} we split the addends of $q_h$ in two
subsets; the first (whose sum is $q_{h,1}$) contains all
addends with last index exactly $h-1$, the second one (whose
sum is  $q_{h,2}$) contains all the other addends with last
index at most $h-3$. The following equalities hold:
\begin{equation}\label{Eqn:311}
    q_{h,1}=x_{h-1}\cdot q_{h-1}
\end{equation}
\begin{equation}\label{Eqn:312}
    q_{h,2}=q_{h-2}
\end{equation}

The \eqref{Eqn:312} follows by definition, the \eqref{Eqn:311}
by:
\begin{equation}\label{Eqn:313}
    \sum_{s=1 \hspace{0.1cm}\text{mod}\hspace{0.1cm}2, s<h} \Pi_{j=1,\ldots,s} x_{i_{j}}=
    x_{h-1}\cdot\Big(1+\sum_{s=0\hspace{0.1cm}\text{mod}\hspace{0.1cm}2, s<h} \Pi_{j=1,\ldots,s} x_{i_{j}}\Big)
\end{equation}
\hspace{3cm} sub to s.c\hspace{5cm} sub to s.c\\

\hspace{2.6cm} $i_s=h-1$ \hspace{5cm} $i_s<h-1$

The left hand side of \eqref{Eqn:313} has already been
explained as the sum of products made by any feasible odd
number of components of the free type representation with last
component exactly $x_{h-1}$ ($h-1$ odd). The right hand side
suggests that all the addends are obtained multiplying by
$x_{h-1}$ all the addends of $q_{h-1}$, i.e. the constant 1
plus the sum of products made by any feasible even number of
components of the representation with last index (even) lower
than $h-1$. This way all constraints of alternation of parity,
last index exactly $h-1$ and hence monotony and first index odd
(either unchanged or $h-1$ for $s=1$) are satisfied. After that
the Theorem (in the even version of the induction) comes
trivially from the following chain:
\begin{equation}\label{Eqn:314}
     q_{h}(_f\x)=q_{h,1}+q_{h,2}=x_{h-1}\cdot q_{h-1}+q_{h-2}=q_h
\end{equation}
which fills the gap between the recursive
solution and the polynomial expansion of the minimal winning quota problem.\\

Case $h$ odd. We still put\\
\begin{equation}\label{eqn:315}
    q_h= q_{h,1}+q_{h,2}
\end{equation}
with:
\begin{equation}\label{eqn:316}
    q_{h,1}=\sum_{s=0\hspace{0.1cm}\text{mod}\hspace{0.1cm}2, s<h} \Pi_{j=1,\ldots,s} x_{i_{j}}
\end{equation}
sub to
\begin{subequations}\label{Eqn:316bis}
\begin{align}
s.c.\label{Eqn316:a}\\
i_s=h-1    \label{Eqn316:b}
\end{align}
\end{subequations}
and:
\begin{equation}\label{eqn:317}
    q_{h,2}=\sum_{s=0 \hspace{0.1cm}\text{mod}\hspace{0.1cm}2, s<h-2} \Pi_{j=1,\ldots,s} x_{i_{j}}
\end{equation}
sub to
\begin{subequations}\label{Eqn:317bis}
\begin{align}
s.c.\label{Eqn317:a}\\
i_s<h-2    \label{Eqn317:b}
\end{align}
\end{subequations}
As before still \eqref{Eqn:311} and \eqref{Eqn:312} hold; in
particular \eqref{Eqn:311} comes from:
\begin{equation}\label{Eqn:318}
    \sum_{s=0 \hspace{0.1cm}\text{mod}\hspace{0.1cm}2, s<h} \Pi_{j=1,\ldots,s} x_{i_{j}}=
    x_{h-1}\cdot\Big(\sum_{s=1\hspace{0.1cm}\text{mod}\hspace{0.1cm}2, s<h-1} \Pi_{j=1,\ldots,s} x_{i_{j}}\Big)
\end{equation}
\hspace{5cm} sub to s.c\hspace{3cm} sub to s.c\\

\hspace{4.6cm} $i_s=h-1$ \hspace{3cm} $i_s<h-1$

The explanation of \eqref{Eqn:318} is (\textit{mutatis
mutandis}) analogous to the one given for \eqref{Eqn:313}.
Hence the chain \eqref{Eqn:314} still holds which completes the
proof.

\end{proof}

It would be useful for the future to denote by  $\varphi_{h,m}$
the $m-$th\footnote{According to whichever order you freely
choose; in the examples of section \ref{sect:6} we will use an
order induced at first by $s$ (in increasing order) and inside
a given $s$, by the lexicographic order dictated by the
sequence of indices (still in increasing order); for $h$ odd
the constant 1 comes first in the ordering.} of the $F_h$
addends of $q_h$ in \eqref{Eqn:37} so that formally:
\begin{equation}\label{Eqn:33}
q_h=\sum_{m=1,\ldots,F_h}\varphi_{h,m}
\end{equation}

With these notation Theorems \ref{teo:31a} and \ref{teo:32} may
be synthesized by:
\begin{equation}\label{Eqn:34}
q_{h+2}=\sum_{m=1,\ldots,F_h}\varphi_{h,m}+x_{h+1}\cdot\sum_{m=1,\ldots,F_{h+1}}\varphi_{h+1,m}
\end{equation}

\section{The symmetry of the function $q_h(_f\x)$}\label{sect:7}

With reference to $h-1$ dimensional vectors, let us introduce
the following definitions.

\begin{defi}\label{def:41}
A couple of vectors $\x=(x_1,x_2,\ldots,x_{h-1})$ and $\y=
(y_1,y_2,\ldots,y_{h-1})$ are each other symmetric if and only
if, for any $t=1,\ldots,h-1$, $x_t=y_{h-t}$.
\end{defi}

\begin{defi}\label{def:42}
A vector function $\Phi$ is symmetric if and only if, for any
couple $\x,\y$ of (each other) symmetric vectors, it is:
$\Phi(\x)=\Phi(\y)$.
\end{defi}

Now, let $\Phi_1,\Phi_2,\ldots,\Phi_m,\ldots,\Phi_n$ be a set
of $n(\geq 2)$ symmetric vector functions, then:

\begin{prop}\label{prop:41}
$\Psi_n=\sum_{m=1,\ldots,n}\Phi_m$ is a symmetric function.
\end{prop}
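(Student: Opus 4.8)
The plan is to deduce the statement immediately from Definition \ref{def:42}, using nothing beyond the fact that a finite sum respects equalities. First I would fix an arbitrary couple $\x,\y$ of each-other-symmetric vectors in the sense of Definition \ref{def:41}; by Definition \ref{def:42} it then suffices to establish $\Psi_n(\x)=\Psi_n(\y)$. Since by hypothesis every $\Phi_m$ is a symmetric vector function, we have $\Phi_m(\x)=\Phi_m(\y)$ for each $m=1,\ldots,n$. Adding these $n$ equalities and recalling $\Psi_n=\sum_{m=1,\ldots,n}\Phi_m$ yields
\begin{equation}
\Psi_n(\x)=\sum_{m=1,\ldots,n}\Phi_m(\x)=\sum_{m=1,\ldots,n}\Phi_m(\y)=\Psi_n(\y),
\end{equation}
which is precisely the defining property of a symmetric function, so the proposition follows.

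An equivalent route is induction on $n$: the base case $n=2$ is $\Psi_2(\x)=\Phi_1(\x)+\Phi_2(\x)=\Phi_1(\y)+\Phi_2(\y)=\Psi_2(\y)$, and the inductive step writes $\Psi_n=\Psi_{n-1}+\Phi_n$ and applies the base case to the two symmetric summands $\Psi_{n-1}$ and $\Phi_n$. In either form there is no genuine obstacle: the argument only exploits that equality is preserved under addition of finitely many terms, so this is essentially a one-line verification whose role is to package, once and for all, the passage from ``symmetric pieces'' to ``symmetric sum''.

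The only care needed concerns the later application in section \ref{sect:8}: there one must first exhibit $q_h$ as a finite sum of functions of $_f\x$ that are each genuinely symmetric. A single addend $\varphi_{h,m}$ of \eqref{Eqn:33} need not be symmetric on its own (for instance the symmetry interchanges $x_1$ and $x_3$ in $q_4$), so the natural plan is to collect the $F_h$ addends into mirror pairs $\varphi_{h,m}+\varphi_{h,m'}$ together with the self-mirror addends --- the constant $1$ when $h$ is odd, and the full product $x_1\cdots x_{h-1}$ --- each such block being symmetric; Proposition \ref{prop:41} then immediately delivers the symmetry of $q_h$, and hence the equality of the minimal winning quota of any couple of twin $P$ games.
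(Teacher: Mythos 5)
Your proof is correct and matches the paper's: your inductive variant is literally the argument given there (base case $\Psi_2(\x)=\Phi_1(\x)+\Phi_2(\x)=\Phi_1(\y)+\Phi_2(\y)=\Psi_2(\y)$ plus the step $\Psi_{m+1}=\Psi_m+\Phi_{m+1}$), and your direct summation of the $n$ equalities $\Phi_m(\x)=\Phi_m(\y)$ is a trivially equivalent packaging of the same idea. Your closing remark about pairing the non-symmetric addends $\varphi_{h,m}$ with their mirrors before invoking the proposition is also exactly how the paper proceeds (Proposition \ref{prop:42} and Theorem \ref{teo:41}), so nothing is missing.
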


\begin{proof}
For $n=2$ and $\x,\y$ symmetric vectors it is:
$$\Psi_2(\x)=\Phi_1(\x)+\Phi_2(\x)=\Phi_1(\y)+\Phi_2(\y)=\Psi_2(\y)$$

An induction argument is then applied proving that, if the Prop.
\ref{prop:41} holds for some $m (\geq 2)$, then it holds also
for $m+1$ (and hence for any $n$):
$$\Psi_{m+1}(\x)=\Psi_{m}(\x)+\Phi_{m+1}(\x)=\Psi_{m}(\y)+\Phi_{m+1}(\y)=\Psi_{m+1}(\y)$$
\end{proof}
\begin{rem}
Note that the symmetry of the $\Phi_m$ functions is a
sufficient but not a necessary condition for the symmetry of
their sum $\Psi$.
\end{rem}

Hereafter we will apply these ideas to the function
$q_h(_f\x)$ defined in \eqref{Eqn:37}. Let us start with the
following

\begin{prop}\label{prop:42}
Suppose an addend $\varphi_{h,m}$ of $q_{h}(_f\x)$ is not
symmetric, then among the other addends there is a non
symmetric one, denoted hereafter by $\varphi_{h,\overline{m}}$
such that $\Phi_{h,m}=\varphi_{h,m}+\varphi_{h,\overline{m}}$
is a symmetric function. Recalling that
$\varphi_{h,m}=\Pi_{j=1,\ldots,s} x_{i_{j}}$, this happens when
$\varphi_{h,\overline{m}}=\Pi_{j=s,\ldots,1} x_{{h-i}_{j}}$.
\end{prop}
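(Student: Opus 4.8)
The plan is to take an arbitrary non-symmetric addend $\varphi_{h,m}=\Pi_{j=1,\ldots,s} x_{i_{j}}$ of $q_h(_f\x)$, with index sequence $(i_1,\ldots,i_s)$ satisfying the standard constraints \eqref{Eqn37bis:a}--\eqref{Eqn37bis:c} and \eqref{Eqn37bis:d}, and to exhibit explicitly the companion sequence obtained by the ``reflection'' $i_j \mapsto h-i_j$, reversed in order. Concretely, I would define $\overline{m}$ to be the index of the product $\varphi_{h,\overline{m}}=\Pi_{j=1,\ldots,s} x_{h-i_{s+1-j}}$, i.e. the sequence $(h-i_s,\,h-i_{s-1},\ldots,\,h-i_1)$, and then verify two things: (a) this new sequence is itself feasible, so $\varphi_{h,\overline{m}}$ really is one of the $F_h$ addends of $q_h$; and (b) the unordered pair $\{\varphi_{h,m},\varphi_{h,\overline{m}}\}$ is invariant under the symmetry $x_t\mapsto x_{h-t}$ of Definition \ref{def:41}, so that $\Phi_{h,m}=\varphi_{h,m}+\varphi_{h,\overline{m}}$ is symmetric in the sense of Definition \ref{def:42}.

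For step (a) I would check the constraints one at a time on the reflected sequence $i'_j := h-i_{s+1-j}$. Strict monotony \eqref{Eqn37bis:b} is immediate: $i_1<\cdots<i_s$ gives $h-i_s<\cdots<h-i_1$. Alternation of parity \eqref{Eqn37bis:c} is preserved because $i'_j+i'_{j+1}=2h-(i_{s+1-j}+i_{s-j})$ has the same parity as $i_{s-j}+i_{s+1-j}$, which is odd by hypothesis. The first-index condition \eqref{Eqn37bis:a} and the last-index condition \eqref{Eqn37bis:d} trade places: using the remark already recorded in the excerpt (for $h$ even, $i_1$ odd forces $i_s$ odd, and for $h$ odd, $i_1$ odd forces $i_s$ even), one gets that $i'_1=h-i_s$ has the required parity for a first index (odd), and that $i'_s=h-i_1<h$ since $i_1\ge 1$, so \eqref{Eqn37bis:d} holds; the parity count $s$ (odd for $h$ even, even for $h$ odd) is unchanged. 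Hence the reflected sequence is feasible and corresponds to a genuine addend $\varphi_{h,\overline{m}}$.

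For step (b), apply the substitution $x_t\mapsto x_{h-t}$ (equivalently, evaluate at a vector $\y$ symmetric to $\x$): $\varphi_{h,m}(\y)=\Pi_{j} y_{i_j}=\Pi_j x_{h-i_j}$, which, after reindexing $j\mapsto s+1-j$, is exactly $\varphi_{h,\overline{m}}(\x)$; symmetrically $\varphi_{h,\overline{m}}(\y)=\varphi_{h,m}(\x)$. Therefore $\Phi_{h,m}(\y)=\varphi_{h,m}(\y)+\varphi_{h,\overline{m}}(\y)=\varphi_{h,\overline{m}}(\x)+\varphi_{h,m}(\x)=\Phi_{h,m}(\x)$, so $\Phi_{h,m}$ is symmetric. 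I would also note that $\overline{m}\ne m$ precisely because $\varphi_{h,m}$ was assumed non-symmetric (if the reflected sequence coincided with the original, $\varphi_{h,m}$ would already be a symmetric function), which guarantees the companion is one of the \emph{other} addends, as claimed.

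The only genuinely delicate point — the ``main obstacle'' — is the bookkeeping on the boundary indices in step (a): one must be careful that the parity of $h$ correctly controls the swap between the ``first index odd'' constraint and the ``last index $<h$'' constraint, and that the edge cases $s=1$ (a single factor $x_{i_1}$ with $i_1$ odd, $i_1<h$, whose reflection is $x_{h-i_1}$) and, for $h$ odd, the lone constant addend $1$ (the empty product, $s=0$, which is trivially symmetric and so is never the non-symmetric $\varphi_{h,m}$ under consideration) are handled consistently with the conventions fixed in Theorem \ref{teo:32} and the surrounding remarks. Once that case analysis is laid out cleanly, the rest is the routine reindexing in step (b).
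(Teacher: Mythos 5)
Your proposal is correct and follows essentially the same route as the paper's own proof: first verify that the reflected index sequence $(h-i_s,\ldots,h-i_1)$ satisfies the feasibility constraints of Theorem \ref{teo:32} (monotony, alternate parity, first index odd via the parity of $i_s$ according to the parity of $h$, last index below $h$), then evaluate on a pair of symmetric vectors to get $\varphi_{h,m}(\x)=\varphi_{h,\overline{m}}(\y)$ and $\varphi_{h,\overline{m}}(\x)=\varphi_{h,m}(\y)$, whence $\Phi_{h,m}$ is symmetric. Your explicit parity computation for the alternation constraint and the remark that non-symmetry of $\varphi_{h,m}$ forces $\overline{m}\neq m$ are slightly more detailed than the paper's wording, but they add nothing structurally different.
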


\begin{proof}
First of all we must check that the factors of
$\varphi_{h,\overline{m}}$ satisfy the feasibility conditions
(monotony, alternate parity, first odd and last lower than $h$)
of Theorem \ref{teo:32}.

Monotony and alternate parity of the sequence of
$\varphi_{h,\overline{m}}$ indices are a straightforward
consequence of monotony and alternate parity of the sequence of
$\varphi_{h,m}$ indices; for $h$ even $i_s$ is odd, which
implies that the first index $h-i_s$ of
$\varphi_{h,\overline{m}}$ is odd too; on the contrary, for $h$
odd $i_s$ is even and $h-i_s$ is odd too and the ``first odd''
condition is satisfied; finally $i_j$ positive and lower than
$h$ for any $j$ in $\varphi_{h,m}$ imply the same property in
$\varphi_{h,\overline{m}}$.

After that, let us consider any couple $\x,\y$ of symmetric
vectors according to definition \ref{def:41}; for such vectors
it is immediately seen that:
$$\varphi_{h,m}(\x)=\varphi_{h,\overline{m}}(\y)$$
and
$$\varphi_{h,\overline{m}}(\x)=\varphi_{h,m}(\y)$$
then
$$
\Phi_{h,m}(\x)=\varphi_{h,m}(\x)+\varphi_{h,\overline{m}}(\x)=\varphi_{h,\overline{m}}(\y)+\varphi_{h,m}(\y)=\Phi_{h,m}(\y)
$$
\end{proof}

It is now easy to show that:
\begin{teo}\label{teo:41}
For any $P$ game with $h$ types, $q_h(_f\x)$ is a
symmetric function
of the free type
representation of the game.
\end{teo}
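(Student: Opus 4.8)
The plan is to reduce Theorem~\ref{teo:41} to the already-established machinery of Section~\ref{sect:7}, namely Propositions~\ref{prop:41} and~\ref{prop:42}. Recall that by Theorem~\ref{teo:32} (in the $\varphi$-notation of \eqref{Eqn:33}) we may write $q_h({_f\x})=\sum_{m=1,\ldots,F_h}\varphi_{h,m}$, a sum of $F_h$ monomial addends in the components of ${_f\x}$. The strategy is: partition the index set $\{1,\ldots,F_h\}$ of addends into blocks; show that the sum over each block is a symmetric function (Definition~\ref{def:42}); and then invoke Proposition~\ref{prop:41} to conclude that the sum over all blocks, i.e.\ $q_h$ itself, is symmetric.

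First I would classify each addend $\varphi_{h,m}=\Pi_{j=1,\ldots,s}x_{i_j}$ as \emph{self-symmetric} or not. By Proposition~\ref{prop:42}, when $\varphi_{h,m}$ is not symmetric there is a companion addend $\varphi_{h,\overline m}=\Pi_{j=s,\ldots,1}x_{h-i_j}$ among the $F_h$ addends such that $\Phi_{h,m}=\varphi_{h,m}+\varphi_{h,\overline m}$ is symmetric; the map $m\mapsto\overline m$ is an involution on the non-self-symmetric addends with no fixed point (if $\overline m=m$ the addend would already be symmetric), so the non-self-symmetric addends come in disjoint pairs, each pair summing to a symmetric function. For a self-symmetric addend, the singleton $\{m\}$ is itself a block whose sum $\varphi_{h,m}$ is (by definition) a symmetric function. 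Thus $q_h$ is expressed as a sum of finitely many symmetric functions — one per pair and one per self-symmetric addend — and Proposition~\ref{prop:41} (applied with $n$ equal to the number of blocks, which is at least $1$, extended trivially to $n=1$) gives that $q_h({_f\x})$ is symmetric. For completeness I would note the degenerate low cases: $q_2=x_1$ and $q_3=1+x_1x_2$, which for $h-1=1$ and $h-1=2$ are visibly invariant under $x_t\mapsto x_{h-t}$ (the constant $1$ is trivially symmetric, and $x_1x_2\mapsto x_2x_1$).

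The one point that genuinely needs care — and which I would flag as the main obstacle — is verifying that the pairing $m\leftrightarrow\overline m$ is well-defined as a partition: that every non-self-symmetric $\varphi_{h,\overline m}$ is genuinely a distinct addend of $q_h$ (not equal to $\varphi_{h,m}$, and not already counted), and that applying the reversal twice returns the original addend. Distinctness from $\varphi_{h,m}$ is exactly the statement that $\varphi_{h,m}$ is not self-symmetric; involutivity follows because reversing the index sequence $i_1<\cdots<i_s$ to $h-i_s<\cdots<h-i_1$ and then reversing again restores $i_1<\cdots<i_s$. What must also be checked is that two \emph{different} non-self-symmetric addends cannot have the same companion — but this is immediate since the reversal operation is injective on index-sequences. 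Here the feasibility verification already carried out in the proof of Proposition~\ref{prop:42} (monotony, alternating parity, first index odd, last index below $h$) guarantees $\varphi_{h,\overline m}$ really occurs among the $F_h$ addends enumerated by Theorem~\ref{teo:32}, so the blocks exhaust $\{1,\ldots,F_h\}$ without overlap. Once this bookkeeping is in place the theorem follows with no further computation.
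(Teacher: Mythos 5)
Your proposal is correct and follows essentially the same route as the paper: split the $F_h$ addends $\varphi_{h,m}$ into those that are already symmetric and pairs $\varphi_{h,m}+\varphi_{h,\overline m}$ made symmetric by Proposition~\ref{prop:42}, then conclude by Proposition~\ref{prop:41}. Your extra bookkeeping (that the reversal map is a fixed-point-free involution, injective, so the pairs and singletons genuinely partition the addends) is a point the paper leaves implicit, and it is a welcome tightening rather than a deviation.
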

\begin{proof}
At first, keep account of the fact that for a given $h$ only
some of the $\varphi_{h,m}$ are symmetric functions, while some
other are not symmetric. Symmetry holds if $\varphi_{h,m}=1$ or
if, for any $t=1,\ldots,h-1$, both or none of the components
$x_t$ and $x_{h-t}$ are factors of $\varphi_{h,m}$.

On the other side for the $\varphi_{h,m}$ which are not
symmetric functions, Prop. \ref{prop:42} holds and this grants
that $q_h(_f\x)$ may be written as a sum of symmetric functions
(the $\varphi_{h,m}$ already symmetric and the $\Phi_{h,m}$
symmetric by Prop. \ref{prop:42}); hence, by Prop.
\ref{prop:41}, it is a symmetric function.

\end{proof}

\section{The equality of the winning quota in symmetric
games}\label{sect:8}

Let us give the following definition:

\begin{defi}
Two $P$ games $G$ and $\overline{G}$ whose free type
representations $_f\x$ and $_f\overline{\x}$ are (each other)
symmetric and not identical are twin games (twins).
\end{defi}

\begin{rem}
For twins $h=\overline{h}$: twins have the same number of types
and of course of players.
\end{rem}

The following theorem concerning twins turns out to be a
straightforward corollary of the  results given in section
\ref{sect:4}:

\begin{teo}\label{teo:51}
Let $G$ and $\overline{G}$ be twins with minimal winning quota
$q$ and respectively $\overline{q}$ bar; then it is
$q=\overline{q}$.
\end{teo}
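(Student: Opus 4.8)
The plan is to derive Theorem \ref{teo:51} as an immediate consequence of Theorem \ref{teo:41} together with the characterization of the minimal winning quota as a function of the free type representation. First I would recall that by the results of Section \ref{sect:6} (Theorems \ref{teo:31a} and \ref{teo:32}), for any $P$ game $G$ with $h$ types the minimal winning quota equals $q_h({}_f\x)$, where ${}_f\x$ is the free type representation of $G$ and $q_h$ is the polynomial function displayed in \eqref{Eqn:37}. In particular, the value $q$ depends on $G$ only through the pair $\bigl(h,{}_f\x\bigr)$.

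Next I would invoke the definition of twins: if $G$ and $\overline{G}$ are twins with free type representations ${}_f\x$ and ${}_f\overline{\x}$, then $h=\overline{h}$ (same number of types, as noted in the remark preceding the theorem) and ${}_f\x$, ${}_f\overline{\x}$ are each other symmetric in the sense of Definition \ref{def:41}, i.e.\ $x_t=\overline{x}_{h-t}$ for all $t=1,\ldots,h-1$. Applying Theorem \ref{teo:41}, the function $q_h$ is symmetric in the sense of Definition \ref{def:42}, so $q_h({}_f\x)=q_h({}_f\overline{\x})$.

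Combining the two observations, $q=q_h({}_f\x)=q_h({}_f\overline{\x})=\overline{q}$, which is the desired equality. The argument is short because all the substantive work has been done upstream; the only point requiring care is to make explicit that the minimal winning quota of a $P$ game is \emph{exactly} the value $q_h$ evaluated at that game's free type representation (so that comparing two games reduces to comparing one function at two symmetric arguments), and that twins share the same $h$ so that the same function $q_h$ is being evaluated in both cases. There is no real obstacle here: the proof is essentially a one-line corollary, and the main thing to watch is simply citing Theorem \ref{teo:41} (and the representation result of Section \ref{sect:6}) rather than re-deriving the symmetry from scratch.
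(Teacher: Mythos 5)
Your proof is correct and follows essentially the same route as the paper: the minimal winning quota is the value of $q_h$ at the free type representation, twins share the same $h$ and have symmetric free type representations, and Theorem \ref{teo:41} then gives $q_h({}_f\x)=q_h({}_f\overline{\x})$, i.e. $q=\overline{q}$. Your version is only slightly more explicit than the paper's in spelling out the representation step, which is fine.
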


\begin{proof}
The symmetry of the function $q_h(_f\x)$ on one side and of
the vectors $_f\x$ and $_f\overline{\x}$ on the other imply
$$q_h(_f\x)=q_h(_f\overline{\x})$$

Hence any couple of twin $P$ games has the same minimal winning
quota.
\end{proof}

\begin{rem}
Elsewhere (see \cite{PPZ2}, sect. 4) we give an alternative
proof of Theorem \ref{teo:51} based on the transposition
properties of the incidence matrices of any couple of twin $P$
games. While probably less elegant from a purely mathematical
point of view, the proof given in this paper has the advantage
to give an enlightening analytic explanation of the true
reasons behind the equality of the minimal winning quotas of
twin games.
\end{rem}

\section{Examples}\label{sect:9}

\paragraph{Ex. 1}
Let us consider the nine person $P$ game $G$ with free type
representation $_f\x=(3,1,2,2)$ and minimal homogeneous
representation $(26;1,1,1,3,4,4,11,11,15)$. There are 5 types
in $G$ $(h-1=4)$ and thus $F_5=5$ addends in the polynomial expansion
$q_5(_f\x)$:
$$q_5(_f\x)=1+ x_1x_2+ x_1x_4+ x_3x_4+ x_1x_2x_3x_4
=1+3+6+4+12=26$$

Three of the addends: 1, $x_1x_4$ and $x_1x_2x_3x_4$ are
symmetric functions of a four dimension vector; the sum
$x_1x_2+x_3x_4$ of the remaining (non symmetric) two is
symmetric too.

The twin $\overline{G}$ has free type representation
$_f\overline{\x}=(2,2,1,3)$ and minimal homogeneous
representation $(26;1,1,2,2,5,7,7,7,19)$. Now:
$$
q_5(_f\overline{\x})=1+x_1x_2+x_1x_4+x_3x_4+x_1x_2x_3x_4=1+4+6+3+12=26$$

Note that the first, third and fifth addend are the same found
in the expansion of $q_5(_f\x)$, while the second and the
fourth have been inverted (so that their sum 7 did not change).

\paragraph{Ex. 2} Let us consider the set of $k-UP$ games $G$ with six types and free type representation $_f\x=(k,k,k,k,k)$. Coherently with row corresponding
to $n=6$ of the triangle in Table 1 (here $h=6$), the minimal
winning quota is expressed in the $k-$Fibonacci polynomial
version by $q_6(\textbf{k})=3k+4k^3+k^5$,
which is the sixth (positive) number of the $k-$Fibonacci
sequence. Note that the sum of the coefficients is 8, i.e. the
sixth Fibonacci number. In particular for $k=2$ the minimal homogeneous representation of the 2-$UP$ game is
(70;1,1,2,2,5,5,12,12,29,29,41). Then, the minimal winning quota
$q_6(\textbf{2})=3\cdot2+4\cdot2^3+2^5=70$, is just the sixth positive number of the Pell
sequence (1,2,5,12,29,70). Hence the amazing equality
between the type weights and the Pell sequence and, more generally, between the type weights and
the $k-$Fibonacci sequences in $k-UP$ games.

\paragraph{Ex. 3}

Let us consider the twelve person $P$ game $G$ with free type
representation $_f\x=(2,1,3,2,2)$ and minimal homogeneous
representation $(61;1,1,2,3,3,3,11,11,25,25,36)$. There are
6 types in $G$ $(h-1=5)$ and thus $F_6=8$ addends in the
polynomial expansion $q_6(_f\x)$:
\begin{equation*}
\begin{split}
q_6(_f\x)&=x_1+x_3+x_5+x_1x_2x_3+x_1x_2x_5+x_1x_4x_5+x_3x_4x_5+
         +x_1x_2x_3x_4x_5\\
         &=2+3+2+6+4+8+12+24=61
\end{split}
\end{equation*}

Among the addends, only the second $x_3$ $(3)$ and the last one $x_1x_2x_3x_4x_5$
$(24)$ are symmetric functions of a five
dimension vector, the other six may be coupled in three pairs,
the first and the third, the fourth and the last but one, the
fifth and the sixth, so as each pair has symmetric sum $(x_1+
x_5= 2+2=4)$, $(x_1x_2x_3+x_3x_4x_5=6+12=18)$,
$(x_1x_2x_5+x_1x_4x_5= 4+8=12)$.

The twin $\overline{G}$ has free type representation
$_f\overline{\x}=(2,2,3,1,2)$ and minimal homogeneous
representation $(61;1,1,2,2,5,5,5,17,22,22,39)$.
\begin{equation*}
\begin{split}
q_6(_f\overline{\x})&=x_1+x_3+x_5+x_1x_2x_3+x_1x_2x_5+x_1x_4x_5+x_3x_4x_5+
         +x_1x_2x_3x_4x_5\\
         &=2+3+2+12+8+4+6+24=61
\end{split}
\end{equation*}
The second and the last one addend are the same found in the
expansion of $q_6(_f\x)$, the other pairs have been
interchanged according to the rule of symmetry of the sum.

We underline that the two games $G$ and $\overline{G}$ of Ex. 3 are two of the many non $UP$ games with 11 players and 6 types, which may be thought as counterpart of the $2-UP$ game of Ex. 2 (with 11 players and 6 types too); on the other side, the minimal winning quota polynomial expansions of $G$ and respectively $\overline{G}$ may be considered as two generalized versions of the polynomial expansion of the sixth number of the Pell sequence.

\section{Conclusions}\label{sect:10}

In the first part of the paper we give a closed form solution in terms of a polynomial expansion to the minimal winning quota of parsimonious games.
Preliminarily, we underline that a parsimonious game is unequivocally described by its free type representation vector and introduce the class of $k-$uniform parsimonious games, i.e. games whose free type representation is uniform at the level $k$.
We show that the minimal winning quota of parsimonious games satisfies a second order linear, homogeneous, finite difference equation, with nonconstant coefficients, except for the uniform case, and index $h$, the number of players types.
We check that, keeping account of the initial conditions, the solution of the equation for the k uniform case is just the $k-$Fibonacci sequence.
Then we recall that any element of the $k-$Fibonacci sequence may be expressed through a polynomial expansion in $k$, whose coefficients are linked to the binomial coefficients of the Pascal triangle.
In order to build a bridge towards the solution of the equation in the nonconstant coefficients case, we introduce an alternative, perfectly equivalent, polynomial expansion, whose coefficients are given by a conveniently modified Pascal triangle.
After that we show that the minimal winning quota in non uniform games  (nonconstant coefficient case) is given by a polynomial expansion of the free type representation vector, which may be considered as the generalized version of the polynomial expansion of the $k-$Fibonacci sequence for the corresponding $k-$uniform games (constant coefficient case).
In the second part of the paper we demonstrate that the minimal winning quota is a symmetric function of its argument (the free type representation vector).
Exploiting this property it is immediate to prove that twin parsimonious games, i.e. couple of games whose type representations are each other symmetric, share the same minimal winning quota.
Finally we underline that this offers an alternative, enlightening proof of a result which has been obtained elsewhere, exploiting properties of the determinants of the incidence matrices of twin games.


\end{document}